\theoremstyle{plain}
\newtheorem{theorem}{Theorem}
\newtheorem{lemma}[theorem]{Lemma}
\newtheorem{prop}[theorem]{Proposition}
\theoremstyle{definition}
\theoremstyle{remark}
\newcommand{\suppress}[1]{}
\newcommand{\zset}{\mathbb Z}
\newcommand{\rset}{\mathbb R}
\newcommand{\cset}{\mathbb C}
\numberwithin{equation}{section}
\newcommand{\ep}{{\varepsilon}}
\newcommand{\cL}{{\mathcal L}}
\newcommand{\be}{\begin{equation}}
\newcommand{\ee}{\end{equation}}
\newcommand{\bea}{\begin{eqnarray}}
\newcommand{\eea}{\end{eqnarray}}
\newcommand{\bean}{\begin{eqnarray*}}
\newcommand{\eean}{\end{eqnarray*}}
\DeclareMathOperator{\Var}{Var}
\newcommand{\valnorm}[3]{\| #1(#2^n) \|_{#3}}
\newcommand{\coeffnorm}[2]{\| #1 \|_{#2}}
\newcommand{\pmo}{(\pm 1)}
\title{
 Quasi-random multilinear polynomials
}
\author{Gil Kalai}\address{Hebrew University} \email{gil.kalai@gmail.com} \author{Leonard J. Schulman}\address{Caltech, Engineering and Applied Science MC305-16, Pasadena CA 91125, USA} \email{schulman@caltech.edu}
\begin{document}

\begin{abstract}
We consider multilinear \textit{Littlewood polynomials}, 
polynomials in $n$ variables in which a specified set of monomials $U$ have $\pm 1$ coefficients, and all other coefficients are $0$. We provide upper and lower bounds (which are close for $U$ of degree below $\log n$) on the minimum, over polynomials $h$ consistent with $U$, of the maximum of $|h|$ over $\pm 1$ assignments to the variables. (This is a variant of a question posed by Erd\"os regarding the maximum on the unit disk of univariate polynomials of given degree with unit coefficients.) We outline connections to the theory of quasi-random graphs and hypergraphs, and to statistical mechanics models. Our methods rely on the analysis of the Gale-Berlekamp game; on the constructive side of the generic chaining method; on a Khintchine-type inequality for polynomials of degree greater than $1$; and on Bernstein's approximation theory inequality.
\end{abstract}

\clearpage

\maketitle
\pagenumbering{arabic}
\section{Introduction}
In this article a \textit{Littlewood polynomial} $h$ is a 
polynomial in $n$ variables in which all nonzero monomial coefficients are units. Such $h$ has the expansion $h(x_1,\ldots,x_n)=\sum_{S}h_S x^S$, where $S$ is a nonempty subset of $[n]$, $x^S=\prod_{i\in S}x_i$, and $h_S \in C$, where $C$
is the unit circle in $\cset$. If all nonzero $h_S$ are in $ \pm 1$ we say the polynomial is real.
Given a set $U$ of monomials in $n$ variables, always excluding the empty monomial (constant function), the real (or complex)
\textit{Littlewood family} of polynomials 
$\cL_{U,\rset}$  (or $\cL_{U,\cset}$)
is the set of real (or complex) Littlewood polynomials whose set of nonzero coefficients equals $U$. We say $U$ is multilinear if every monomial in it is; that $U$ is homogeneous if all monomials have the same total degree; and that $U$ is $d$-bounded $(d 
\geq 1)$ if the maximum total degree of a monomial is $d$. We also think of $U$ as a hypergraph, and a polynomial $h$ as a real or complex-valued function on the edges.

Let $D$ be the unit disk in $\cset$ (so $C=\partial D$), and $I$ the closed interval $[-1,1]$. 
We abbreviate the sup norm of $h$ on a set $X$ as $\| h(X) \|_{\infty}$.
This work is
primarily concerned with bounding
\[ \inf_{h \in \cL_{U,\rset}} \valnorm{h}{I}{\infty} \]
as a function of $U$.
 Observe that for multilinear $U$ and any $h \in  \cL_{U,\cset}$, 
 $\valnorm{h}{\pmo}{\infty} = \valnorm{h}{I}{\infty}$.
Let $u=|U|$ and for $i \in [n]$ let $u_i=|\{S: i \in S \in U\}|$. It turns out that for small $d$, a key role is played by the quantity $\sum \sqrt{u_i}$.
 Our main result is: 
\begin{theorem} \label{main-thm} For $d$ fixed and $U$ a $d$-bounded multilinear
Littlewood family,
\be \inf_{h \in \cL_{U,\rset}} \valnorm{h}{I}{\infty} \in 
\Theta_d\left( \sum \sqrt{u_i} \right) 
\ee
More precisely and without assumptions on $d$,
\be \max\{ \frac{1}{3^{d-1}2^{5/2} (d-1)^{3/2}d}
\sum \sqrt{u_i}
,
\sqrt{u}
\}
\leq 
\inf_{h \in \cL_{U,\rset}} \valnorm{h}{I}{\infty}  \leq \min \{ 
3.34 \sum \sqrt{u_i},
\sqrt{2(n+1)u}
 \} \label{main-bd} \ee
\end{theorem}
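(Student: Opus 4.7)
The plan is to handle the four inequalities in Theorem~\ref{main-thm} separately, with the tight upper bound $3.34 \sum\sqrt{u_i}$ being the main obstacle.

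\textbf{Easy bounds.} For the lower bound $\sqrt u$, multilinearity of $U$ makes $\{x^S : S \in U\}$ orthonormal under the uniform measure on $\{\pm 1\}^n$, so Parseval yields $\mathbb{E}_x h(x)^2 = u$; combined with the observation that $\valnorm{h}{I}{\infty} = \valnorm{h}{\pmo}{\infty}$ for multilinear $h$ (recorded in the paper), this gives $\valnorm{h}{I}{\infty} \geq \sqrt u$. For the upper bound $\sqrt{2(n+1)u}$, I would take the $\epsilon_S$ independent uniform $\pm 1$: for each fixed $x \in \pmo^n$, $h(x)$ is a Rademacher sum of $u$ unit terms, Hoeffding gives $\Pr[|h(x)|>t]\leq 2e^{-t^2/(2u)}$, and union-bounding over the $2^n$ vertices at $t = \sqrt{2(n+1)u}$ leaves failure probability $(2/e)^{n+1}<1$, so some sign pattern witnesses the bound.

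\textbf{Lower bound of order $\sum\sqrt{u_i}$.} Let $M = \valnorm{h}{I}{\infty}$ and for each $i$ set $L_i(x_{-i}) := \partial h/\partial x_i = \sum_{S \ni i} h_S x^{S\setminus i}$, a multilinear Littlewood polynomial of degree $\leq d-1$ with $u_i$ monomials. Differencing $h(1,x_{-i})-h(-1,x_{-i})=2L_i(x_{-i})$ immediately yields $\|L_i\|_\infty \leq M$. To convert the max over $i$ into a sum, for each $x^* \in \pmo^n$ I would probe along the retraction direction
\[ y_i \;=\; -\frac{x^*_i}{d}\,\mathbf{1}[\,x^*_i L_i(x^*_{-i}) > 0\,]\;\in\;\{0, \pm 1/d\}, \]
so that $x^* + ty \in I^n$ for every $t \in [0, 2d]$. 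Then $g(t):=h(x^*+ty)$ is a univariate polynomial of degree $\leq d$ satisfying $\|g\|_{\infty,[0,2d]} \leq M$, whence Markov's inequality (on a rescaled interval of length $2d$) gives $|g'(0)| \leq dM$. By construction $|g'(0)| = (1/d)\sum_i (x^*_i L_i(x^*_{-i}))^+$. Averaging over $x^*$ uniform on $\pmo^n$ uses the symmetry of $x^*_i L_i$ under $x^*_i \leftrightarrow -x^*_i$ (since $L_i$ does not depend on $x^*_i$) to give $\mathbb{E}(x^*_i L_i)^+ = \tfrac12 \mathbb{E}|L_i|$, and a Khintchine-type bound derived from Bonami--Beckner hypercontractivity gives $\mathbb{E}|L_i|\geq 3^{-(d-1)}\sqrt{u_i}$ for Rademacher chaos of degree $\leq d-1$ (combining $\|L_i\|_4 \leq 3^{(d-1)/2}\|L_i\|_2$ with H\"older to get $\|L_i\|_1 \geq \|L_i\|_2 / 3^{d-1}$). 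These combine to $M \geq (2d^2 \cdot 3^{d-1})^{-1}\sum_i \sqrt{u_i}$; the precise denominator $3^{d-1}\cdot 2^{5/2}\cdot (d-1)^{3/2}\cdot d$ in the theorem should follow by replacing Markov's inequality with Bernstein's sharper $1/\sqrt{1-x^2}$ estimate (shifted and rescaled to $[0,2d]$) and by exploiting the extra degree-of-freedom that $L_i$ has degree $d-1$ rather than $d$.

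\textbf{Upper bound $3.34\sum_i\sqrt{u_i}$ — the main obstacle.} I would attack this by the constructive side of generic chaining, paralleling the analysis of the Gale-Berlekamp game. Order the variables and build the natural dyadic tree of partial $\pm 1$-assignments, associating to level $k$ the ``increment'' $x_{i_k}\cdot L_{i_k}$ peeled off when $x_{i_k}$ is fixed; Parseval gives this increment $L^2$-mass $\sqrt{u_{i_k}}$. A chaining argument applied to the random process $\{h(x)\}_{x\in\pmo^n}$, combined with Spencer-style derandomization (choosing the signs $h_S$ one at a time to maintain a layered potential function built from Hoeffding tails at each scale), should yield an explicit $h$ whose sup norm telescopes to $O\!\left(\sum_i \sqrt{u_i}\right)$, with the constant $3.34$ arising from optimizing the chaining branching factor against the Hoeffding tail parameter. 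The main difficulty is avoiding the $\sqrt{\log n}$ factor that a single global union bound would introduce (exactly what chaining is designed to remove) while keeping the construction deterministic and the absolute constant small; the Khintchine-type inequality for polynomials of degree $>1$ cited in the abstract appears essential for controlling the higher-degree residuals that survive each peeling step, and Bernstein's inequality is what certifies that the maximum over $I^n$ is captured (up to a constant) by the discrete process over $\pmo^n$ on which the chaining is carried out.
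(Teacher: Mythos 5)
Your two easy bounds are fine and match the paper's. The interesting content is in the other two inequalities, and there your proposal splits sharply: the $\Omega\!\left(\sum\sqrt{u_i}\right)$ lower bound is correct, genuinely different from the paper's, and in fact \emph{stronger}; the $O\!\left(\sum\sqrt{u_i}\right)$ upper bound sketch has real gaps.

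\textbf{Lower bound.} The paper reduces to the Gale--Berlekamp picture by randomly splitting $[n]=X\sqcup Y$ (each $i\in X$ with probability $\tfrac{1}{2(d-1)}$, costing a factor $2^{-5/2}(d-1)^{-3/2}$), decomposing $h=\sum_s h_s$ by $X$-degree, choosing $y$ to maximize $\sum|h_{i*}|$ via Khintchine, choosing $x\in\pmo^X$ so that $h_1=\sum|h_{i*}|$, and finally applying Bernstein's derivative bound at $z=0$ to $f(z)=\sum z^s h_s$ to absorb the uncontrolled $h_0,h_2,\dots$. Your retraction-direction probe $y\in\{0,\pm1/d\}^n$ together with Markov at the endpoint of $[0,2d]$ collapses all of that: the sign indicator $\mathbf{1}[x_i^*L_i>0]$ plays the role of both the $X/Y$ split and the sign choice for $x$, and averaging $\mathbb{E}(x_i^*L_i)^+=\tfrac12\mathbb{E}|L_i|$ replaces selecting a good $x$. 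The Khintchine step is the same (via Bonami; note it still applies even when $\{i\}\in U$ and $L_i$ has a constant term, since Bonami holds for all multilinear polynomials of degree $\le d-1$ and the $L^1$-lower-bound argument does not require $\mathbb{E}L_i=0$). The result $M\ge\frac{3^{1-d}}{2d^2}\sum\sqrt{u_i}$ dominates the theorem's $\frac{3^{1-d}}{2^{5/2}(d-1)^{3/2}d}\sum\sqrt{u_i}$ for every $d\ge2$ (equivalent to $2^{3/2}(d-1)^{3/2}\ge d$), so the speculation in your last sentence about needing Bernstein's $1/\sqrt{1-x^2}$ refinement to ``recover'' the paper's constant is moot --- you have already beaten it.

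\textbf{Upper bound.} Here the proposal identifies generic chaining as the engine but does not supply the construction, and the supporting tools you invoke are misattributed, so this part does not yet constitute a proof. The paper sorts variables so $u_1\ge\cdots\ge u_n$, partitions $U$ into $V_k=\{S\in U:\max S\in[2^{k-1},2^k-1]\}$ along \emph{dyadic blocks of indices}, and fixes $\{h_S:S\in V_k\}$ one block at a time by a probabilistic argument: the increment at level $k$ is a Rademacher sum of length $v_k$, and a union bound over the $2^{2^k}$ partial assignments $x(k)$ matches the Chernoff tail $\exp(-\lambda^2 2^{k-1})$, giving $1.18\sum_k 2^{k/2}v_k^{1/2}\le 3.34\sum\sqrt{u_i}$ after a telescoping that uses the monotone ordering. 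Your variable-at-a-time peeling does not work: at step $i$ the union bound is over $2^i$ partial assignments while the increment has only $\sqrt{w_i}$ mass (where $w_i=|\{S\in U:\max S=i\}|$), yielding $\sum\sqrt{i\,w_i}$, which is generically much larger than $\sum\sqrt{u_i}$. The dyadic blocking is essential and must be stated. Also: Spencer-style potential-function derandomization is unnecessary (the existence argument at each level is a plain union bound); the Khintchine inequality is not used at all in the upper bound; and Bernstein's inequality is not what identifies $\max_{I^n}|h|$ with $\max_{\pmo^n}|h|$ --- that is an \emph{exact} identity because a multilinear polynomial is affine in each variable and hence extremized at a vertex of the cube, a fact you already used correctly in the lower-bound paragraph.
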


The lower bound $\sqrt{u}$ is straightforward; for large $d$ it is actually stronger than the bound controlled by $\sum \sqrt{u_i}$ because of the exponential loss in $d$, but
for small $d$ it is weak. We discuss this further below.
The upper bound $\sqrt{2(n+1)u}$ is also straightforward; generally it is weaker than the bound controlled by $\sum \sqrt{u_i}$, sometimes substantially so (e.g., 
 consider the disjoint union of a clique on $n^{\ep}$ vertices, $\ep>1/2$, with a matching on $n-n^{\ep}$ vertices). There are situations where it is stronger, especially when $d$ is large and the $u_i$ are balanced, but it cannot be stronger by more than a factor of $3.34 \sqrt{d/2}$, due to Cauchy-Schwarz: $\sum \sqrt{u_i} \leq \sqrt{n \sum u_i} \leq \sqrt{dnu}$.
 
It is not really necessary in the theorem to assume $U$ is multilinear, since exponents may be reduced by $2$ without affecting evaluation on $\pmo^n$; in the process however monomials identify and their coefficients add. One can write down bounds in terms of general coefficients but for brevity and because of connections we sketch shortly, we restrict our statements to Littlewood polynomials.

It is interesting to also study 
\[ \inf_{h \in \cL_{U,\cset}} \valnorm{h}{D}{\infty}. \]
 By the maximum modulus principle, for any $U$ (not necessarily multilinear) and $h \in  \cL_{U,\cset}$,
 $\valnorm{h}{C}{\infty} = \valnorm{h}{D}{\infty}$. Our upper and lower bounds can be replicated with mild loss for $\inf_{h \in \cL_{U,\cset}} \valnorm{h}{D}{\infty}$; we omit doing so since no new ideas are involved. The upper bound does not require the multilinear assumption.

\section{Background and related literature}
\subsection{An elementary lower bound}
For a polynomial $h=\sum_S h_S x^S$ (not necessarily multilinear, so $S$ may range over multisets), and for $1\leq \alpha< \infty$,
let \[\valnorm{h}{\pmo}{\alpha}= \left(2^{-n} \sum_{x \in \pmo^n} |h(x)|^\alpha\right)^{1/\alpha}\] and 
 \[ \valnorm{h}{C}{\alpha} = \left( (2\pi)^{-n} \int_{0}^{2\pi} \cdots  \int_{0}^{2\pi} |h(e^{i\theta_1},\ldots,e^{i\theta_n})|^\alpha  d\theta_1\cdots d\theta_n\right)^{1/\alpha}. \]
These are the \textit{evaluation norms} of the polynomial (normalized to be means), with $\alpha=\infty$ being the max norm. The \textit{coefficient norms} are $\coeffnorm{h}{\alpha}=\left(\sum_S |h_S|^\alpha\right)^{1/\alpha}$ with of course $\coeffnorm{h}{\infty}=\max_S |h_S|$ (these are not normalized to be means). 
The power-means inequality gives 
\be \valnorm{h}{C}{\infty} \geq  \valnorm{h}{C}{2} \label{elem-lb1} \ee
and considering $C$ as the circle group, we have the Plancherel identity
\be \valnorm{h}{C}{2} 
= \coeffnorm{h}{2} =u^{1/2}
\label{elem-lb2} \ee
Let $L(d)$ denote
the complex univariate polynomials of degree $d$, all of whose coefficients are units; these are customarily called Littlewood polynomials (we have adopted the term more broadly). In 1957
Erd\"os~\cite{Erdos57} asked for 
the minimum over $L(d)$ of the sup norm of polynomials on the unit disk $D$, i.e., for
$\min_{h \in L(d)} \max_{x \in D} |h(x)|$;  he also investigated the analogous question for trigonometric polynomials~\cite{Erdos62}. 
Littlewood~\cite{Littlewood66,Littlewood68} asked for 
$\min_{h \in L(d)} ((\max_{x \in C}  |h(x)|)-(\min_{x \in C}  |h(x)|))$; this is known as Littlewood's flatness problem. The questions have been investigated for both complex and real (i.e., $\pm 1$) coefficients.
Our paper concerns Erd\"os's question as applied to multilinear polynomials. 

For univariate Littlewood polynomials of degree $d$,~\eqref{elem-lb1}-\eqref{elem-lb2} specializes to
$\|h(C)\|_\infty \geq  \|h(C)\|_2 
= \coeffnorm{h}{2} \geq
 \frac{1}{\sqrt{d+1}} \coeffnorm{h}{1}. $
The last two terms are necessarily equal, and equal to $\sqrt{d+1}$. There has been careful study of 
how tight this inequality is.
First, for the case of real (i.e., $\pm 1$) coefficients, the Rudin-Shapiro polynomials~\cite{Shapiro51,Rudin59,Littlewood66} match the lower bound to within a factor of $\sqrt{2}$. It is an open problem to close this gap. For the case of complex coefficients, the gap to the lower bound is $(1+\ep_n)$ for some $\ep_n \to 0$ (these are called ``ultraflat'' polynomials); this construction is due to Kahane~\cite{Kahane80} (and see a correction in~\cite{QueffelecS96}). For a survey of these and considerably more results in the univariate case, see~\cite{Erdelyi01polynomialswith,Borwein02}; for some recent results see~\cite{JedwabKS13,Schmidt14}.

A bound analogous to~\eqref{elem-lb1}-\eqref{elem-lb2} holds with the group $\zset/2$ replacing the circle group: Let $U$ be multilinear.
For $h\in \cL_{U,\rset}$, \be \valnorm{h}{\pmo}{\infty}\geq \valnorm{h}{\pmo}{2} = \coeffnorm{h}{2}=u^{1/2}. \label{elem-lb3} \ee
(This establishes the simpler of the two lower bounds in~\eqref{main-bd}.)
The inequality is again power means. 
The equality can be understood in two 
equivalent ways. The first is that each multilinear monomial $x^S$ is a multiplicative character on the group $(\zset/2)^n$ and so the transformation between the vector of coefficients $h_S$ and the vector of evaluations $h(\pmo^n)$ is the Fourier transform over  $(\zset/2)^n$, so this is the Parseval identity. The second way is to consider each $x_i$ to be an iid uniform random variable on $\pm 1$. Then the monomials $x^S$ are also uniform on $\pm 1$, and, since they are multilinear, they are pairwise independent. Since $E(h(x))=0$, $\valnorm{h}{\pmo}{2}^2 =E(h^2(x))= \Var(h(x))=\sum_S \Var(h_S x^S)=\sum_S h^2_S= \coeffnorm{h}{2}^2$. 

\subsection{Quasi-random graphs and hypergraphs}
Chung, Graham and Wilson~\cite{ChungGW89} (and for precursors see~
\cite{Thomason87a,Thomason87b,Rodl86,FranklRW86}) define a family of graphs $G_i=(V_i,E_i)$ to be quasi-random if, among several equivalent characterizations, for any functions $x_i:V_i\to \pm 1$, \[\left|\sum_{\{u,v\} \in E_i} x_i(u)x_i(v) - \sum_{\{u,v\} \notin E_i} x_i(u)x_i(v)\right| \in o(|V_i|^2). \] 
Simply by setting $h_{uv}=1$ for edges and $h_{uv}=-1$ for non-edges, this expression exactly corresponds to evaluation of a multilinear Littlewood polynomial with $U=\binom{V_i}{2}$. A quasi-random graph is one whose corresponding polynomial achieves a non-trivial, that is sub-quadratic (though not necessarily optimal in the exponent) sup-norm bound. 

Let $A_i$ be the adjacency matrix of $G_i$ (and note the graph has no loops or multiple edges); and consider $x$ as a column vector (thus $\|x\|_2=\sqrt{|V_i|}$). Then an equivalent characterization of quasi-random graphs is that
\[ \left| x^* (2A_i+I-J) x \right| / \|x\|_2^2 \in o(|V_i|) \]
with $I$ the identity matrix and $J$ the all-ones matrix.

In fact, an example given in~\cite{ChungGW89} yields a
\textit{deterministically constructible} polynomial of degree $2$ with an optimal (up to the constant) norm bound. Specifically, the Paley graph $P_n$ (vertices are elements of the field $GF(n)$ for $n=p^\alpha$, $p$  prime $ \equiv 1 \bmod 4$; $\{i, j\}$ is an edge if $i-j$ is a square) gives a homogeneous degree $2$ polynomial $h_{P_n}$ with $u=\binom{n}{2}$ such that\footnote{The Paley graph is strongly regular~\cite{KrivelevichS06} and its adjacency matrix has the spectrum: $\lambda_1=(n-1)/2$ with multiplicity one (eigenvector $v_1=$ the constant function), and $\lambda_2,\ldots,\lambda_n \in (-1\pm \sqrt{n})/2$ (each with multiplicity $(n-1)/2$). Then with $A$ the adjacency matrix of $P_n$, $(2A+I-J)$ commutes with $A$ and has spectrum $0$ for $v_1$, $2\lambda_i+1$ for remaining eigenvalues, so by Courant-Fischer, for $x \in (\pm 1)^n$, $|x^* (2A+I-J) x| \leq \|x\|_2^2 \max_{i\geq 2} |2\lambda_i+1|=n^{3/2}$.
} 
\[ \|h_{P_n}((\pm 1)^n)\|_{\infty} \leq n^{3/2}. \]
Theorem~\ref{main-thm} establishes on the other hand that $ \|h_{P_n}((\pm 1)^n)\|_{\infty} \in \Omega(n^{3/2})$.

This connection between graphs and polynomials is the reason we adopt the term ``quasi-random'' for a polynomial which achieves a sup-norm which is $o(u)$. However this is a fairly soft requirement, and as indicated in Theorem~\ref{main-thm}, a random polynomial achieves a much tighter bound.

We mention also that Simonovits and S{\'o}s~\cite{SimSos91} established another characterization of quasi-random graphs: they are those which, in the partitions provided by the Szemer{\'e}di regularity lemma, have almost all inter-block edge densities close to $1/2$. Through this connection quasi-random graphs play a role in the theory of graph limits; see Lov{\'a}sz's monograph~\cite{lovaszGraphLimits}.

Similar quasi-randomness questions have also been explored for hypergraphs~\cite{havilandT89,ChungG89,chung90,kohayakawaRodl02,ConlonHPS12}. The last of these defines property DISC$_r(\delta)$ for a $d$-uniform hypergraph to be that as $n \to \infty$ and for all $S \subseteq [n]$, the number of induced $d$-hyperedges in $S$ is in the interval $r \binom{|S|}{d} \pm \delta n^d$; a hypergraph achieving small $\delta$ (i.e., having a number of induced hyperedges that is closely predicted just by $|S|$)
is to be considered quasi-random. We can again convert a $d$-uniform hypergraph to a multilinear polynomial by putting, for all $S \in \binom{[n]}{d}$, $h_S=1$ if $S$ is in the hypergraph and $h_S=-1$ otherwise. However, for $d>2$, 
there does not seem to be a close connection between being quasi-random in the sense of DISC and being quasi-random in the sense of the sup-norm of the polynomial. DISC essentially counts how many hyperedges intersect but are not supported on $S$; it does not distinguish further among these hyperedges based on the cardinality of their intersection with $S$.
 
However, a different single-number summary of the intersection frequencies starts with the perspective 
that graph expansion measures the quality of the graph as a cut approximator of the complete graph, which is the same thing as how well it approximates the second \textit{binary Krawtchouk polynomial}~\cite{kraw29,leven95} at $1/2$; equivalently, edges are counted according to the parity of their intersection with $S$.
The polynomial sup-norm that we study generalizes this for $U=\binom{[n]}{d}$: it measures 
the quality of a $d$-uniform hypergraph as a cut approximator of the $d$'th binary Krawtchouk polynomial at $1/2$. Equivalently, hyperedges are counted according to the parity of their intersection with $S$.
 This same quantity is the key measure of hypergraph expansion in the work of Ta-Shma on small $\varepsilon$-biased sets~\cite{TaShma17}. 

\subsection{Spin glasses} In statistical mechanics and condensed matter, physicists have long studied ``local Hamiltonians'', that is, energy functions defined in terms of local interactions between particles. Locality usually takes its meaning from a graph embeddable without distortion in one to three dimensions of space (often but not always a lattice). The Ising model (see, e.g.,~\cite{PathriaB11}) is a common model in which each particle takes on just two states $\pm 1$, and, with $V$ denoting the set of particles and $U$ the edges between them, the energy of the system in state $x:V\to \pm1$ is $H(x)=-\sum_{\{u,v\} \in U} h_{\{u,v\}} x(u)x(v)$. A ferromagnetic Ising model is one in which $h_{\{u,v\}}\geq 0$ for all $\{u,v\}\in U$; this is a model whose ground state is frustration-free, namely, all contributions to the energy are simultaneously at their least possible value. In our setting this ``ferromagnetic polynomial'' $h$ achieves the worst-possible (largest) norm bound. On the other hand \textit{spin glasses} are models $h$ which 
necessarily exhibit a great deal of cancelation among the contributions to the energy; in our setting this is a polynomial with a strong norm bound. Typically in physics one considers a random $h$ to be a spin glass. Our work shows that such systems can achieve non-trivially low energies, i.e., significantly less ``frustration'' than one would see in a random state $x$.

\subsection{Bent functions}
Bent functions~\cite{Rothaus76,dillon72,dillon74} are functions
$\xi:\{0,1\}^n\to\{0,1\}$ such that the function $(-1)^\xi$, which is of course perfectly ``flat'' (all values have a common norm), has a Fourier transform 
 over
$(\zset/2)^n$ that is also perfectly flat, namely 
(with
the transform scaled by $2^{-n/2}$) the transform is $(-1)^\eta$ for
another function $\eta:\{0,1\}^n\to\{0,1\}$. Thus each of $(-1)^\xi$ and $(-1)^\eta$ is a perfectly
flat multilinear Littlewood function; as we have seen, this
is possible only due to their high degree. Bent functions
have been extensively studied particularly in view of applications in
cryptography~\cite{Tokareva15,carletM16} and quantum
computation~\cite{childsKOR13}.

\section{Proof of Theorem~\ref{main-thm}: upper bound}
The case $d=1$ is trivial; now assume $d\geq 2$.
We employ the generic chaining following Talagrand~\cite{Talagrand96}.
Order the variables so that $u_1 \geq \ldots \geq u_n\geq 1$. Set $K= \lceil
\lg (n+1) \rceil$ and partition $U$ into the sets $V_k$, $1\leq k \leq
K $, $V_k=\{S\in U: S \cap [2^{k-1} ,2^k-1] \neq \emptyset \text{ and
} S \cap [2^k,n]=\emptyset \}$. Then $v_k:=|V_k|\leq \sum_{i= 2^{k-1}
}^{\min\{2^k-1,n\}} u_i$. Note that $v_1\leq 1$. 

\begin{lemma} 
There is an $h$ s.t.\ 
$ \max_{x \in \pmo^n} |h(x)| \leq 1.18 \sum_{k\geq 1} 2^{k/2} v_k^{1/2}$.
 \end{lemma}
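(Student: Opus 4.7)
The plan is to pick the coefficients $h_S$, $S\in U$, independently and uniformly at random in $\pm 1$, show that $E[\max_{x\in\pmo^n}|h(x)|]$ is strictly less than $1.18\sum_k 2^{k/2}v_k^{1/2}$, and invoke the probabilistic method to extract a specific $h\in\cL_{U,\rset}$ witnessing the lemma.

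The chaining enters through the decomposition $h=\sum_{k=1}^K h_k$ dictated by the given partition, where $h_k(x):=\sum_{S\in V_k}h_S\,x^S$. The key structural observation is that every $S\in V_k$ lies in $[1,2^k-1]$, so $h_k$ depends on at most $2^k-1$ of the $n$ variables and therefore takes at most $N_k:=2^{2^k-1}$ distinct values on $\pmo^n$. For each fixed $x$, the scalar $h_k(x)$ is a Rademacher sum of $v_k$ terms of absolute value $1$, so by Hoeffding's lemma it is sub-Gaussian with proxy variance $v_k$.

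Applied to the symmetric family $\{h_k(y):y\in\pmo^{2^k-1}\}$, the standard sub-Gaussian maximal inequality $E[\max_i|X_i|]\le\sigma\sqrt{2\ln(2N)}$ yields
\[
E\bigl[\max_{x\in\pmo^n}|h_k(x)|\bigr] \le \sqrt{2v_k\ln(2N_k)} = \sqrt{2\ln 2}\cdot 2^{k/2}v_k^{1/2}.
\]
Combining this with the triangle inequality $\max_x|h(x)|\le\sum_k\max_x|h_k(x)|$ and summing over $k$ produces
\[
E\bigl[\max_x|h(x)|\bigr]\le \sqrt{2\ln 2}\sum_k 2^{k/2}v_k^{1/2} < 1.18\sum_k 2^{k/2}v_k^{1/2},
\]
so some realization of the random signs furnishes the required $h$.

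The proof itself is short once the partition is fixed; the only delicate point is that the target constant $1.18$ only barely exceeds $\sqrt{2\ln 2}\approx 1.1774$, so one must use the form $\sigma\sqrt{2\ln(2N)}$ of the maximal inequality and cannot afford extra slack anywhere else. The partition scale $2^{k-1}\le i<2^k$ is precisely what equates the entropy increment $\ln N_k$ to $2^k\ln 2$ and balances it against the variance $v_k$, making the geometric sum $\sum_k 2^{k/2}\sqrt{v_k}$ emerge with a uniform multiplicative constant; any coarser or finer schedule would degrade that constant.
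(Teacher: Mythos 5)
Your proof is correct and follows essentially the same chaining argument as the paper: you decompose $h$ into the level polynomials $h_k$ supported on $V_k$, observe each depends on only the first $2^k-1$ variables, and control each level by a sub-Gaussian tail bound over the $2^{2^k-1}$ restrictions. The paper phrases this as an iterated union bound (Chernoff plus union over $x(k)$ at each level, then fix that level's coefficients), while you take expectations and apply the sub-Gaussian maximal inequality before invoking the probabilistic method once at the end; these are interchangeable here since the $V_k$ are disjoint, and both yield the same threshold $\sqrt{2\ln 2}\approx 1.1774<1.18$.
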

\begin{proof} 
For $x \in \pmo^n$ let $x(k)=(x_1,\ldots,x_{2^k-1},0,\ldots,0)$ (with
$x(0)=(0,\ldots ,0)$ and $x(K)=x$). It is useful to expand $h(x) =
\sum_{k=1}^{K} h(x(k))-h(x(k-1))$. For $k=1,\ldots,K$, in order, we will fix the coefficients $\{h_S: S \in V_k\}$ so that for all $x$,  $|h(x(k))-h(x(k-1))| \leq 1.18 \cdot 2^{k/2} v_k^{1/2}$.

Suppose that $h_S$ has been fixed for all $S \in V_1 \cup \ldots \cup V_{k-1}$. 
Choosing $h_S$ uniform iid for all $S \in V_k$, $h(x(k))-h(x(k-1))$ is a symmetric random walk of length $v_k$. 
By a standard Chernoff bound~\cite{Meyer72}, for any $\lambda>0$, $\Pr [|h(x(k))-h(x(k-1))| > \lambda 2^{k/2} v_k^{1/2}] \leq 2\exp(-\lambda^2 2^{k-1})$. 

Taking a union bound over $x(k)$, we have \[\Pr[ \exists x(k): |h(x(k))-h(x(k-1))| > \lambda 2^{k/2} v_k^{1/2}] \leq 2^{2^k}\exp(-\lambda^2 2^{k-1})
= 2^{(1- \lambda^2(\lg e)/2)2^k}
.\]
In order that there exist a satisfactory choice of coefficients $h_S$ for $S \in V_k$ it suffices that $\lambda>\sqrt{2/\lg e}$; e.g., we may take $\lambda=1.18$.
\end{proof}

To apply this in the theorem, consider any $1<k\leq K$. Then 
\bean 2^{k/2}v_k^{1/2} &\leq & 2^{k/2} \left(\sum_{i=2^{k-1}}^{\min(2^k-1,n)} u_i\right)^{1/2}
\\ &\leq & 2^{k/2} (2^{k-1} u_{2^{k-1}})^{1/2} \\ &\leq & 2^{k-1/2} u_{2^{k-1}-1}^{1/2}
\\ &\leq & 2^{3/2} \sum_{i=2^{k-2}}^{2^{k-1}-1} u_i^{1/2}
. \eean 
Also, for $k=1$ observe that $2^{k/2} v_k^{1/2} \leq \sqrt{2} \cdot 1 \leq \sqrt{2} u_{2^{K-1}} \leq 2^{3/2} u_{2^{K-1}}
$.

Thus 
$1.18 \sum_{k=1}^K 2^{k/2} v_k^{1/2} 
\leq 1.18 \cdot 2^{3/2} \sum_{i=1}^{2^{K-1}} u_i^{1/2} \leq 3.34 \sum_{i=1}^n u_i^{1/2}$. 

\section{Proof of Theorem~\ref{main-thm}: lower bound}
\label{lb-sec}
The simple bound of $u^{1/2}$ was shown in~\eqref{elem-lb3}. The case $d=1$ is trivial; now assume $d\geq 2$. In a high-level view, the proof strategy is to reduce to the linear case, in a manner modeled after the solution of the 
Gale-Berlekamp game~\cite{BrownS71,GordonW72,FishburnS89}, which concerns the case of a ``bipartite" quadratic polynomial. In our situation, several additional steps are needed: one being to find a suitable partition of the variables that replaces this bipartite structure, a second involving a generalization of the Khintchine-Kahane inequality, and the last being to use a bound from approximation theory to overcome cancelation by nonlinear terms (this will be clearer below). 

Our first step is to split the variables
 into two sets $X$ and $Y$, with a desirable property on the
degrees (in the hypergraph of monomials) of the variables in $X$. 

Consider selecting a set $X$ as follows:  each $i$ is
included in $X$ independently with probability $\frac{1}{2(d-1)}$. Let
$M_i=\{a \in U: a \cap X = \{i\}\}$, and  
 $m_i = |M_i|$. Note that $m_i=0$ for $i \notin X$.
 
 Conditioned on $i \in X\cap a$,
 $\Pr(a \in M_i)\geq 1-\frac{|a|-1}{2(d-1)}$ (applying independence and a union bound).  Thus (without conditioning), for any $i$, $E m_i \geq \frac{u_i}{4(d-1)}$. 

Since $m_i \leq u_i$, we can write $\frac{u_i}{4(d-1)} \leq (1-\Pr(m_i > \frac{u_i}{8(d-1)})) \frac{u_i}{8(d-1)}  + \Pr(m_i > \frac{u_i}{8(d-1)})u_i$, 
consequently $\Pr(m_i > \frac{u_i}{8(d-1)})
\geq \frac{1}{8(d-1)}$. So for any $i$, $E m_i^{1/2} \geq 2^{-5/2} (d-1)^{-3/2} u_i^{1/2}$. 

Consequently
\be E \sum m_i^{1/2} \geq 
\frac{1}{2^{5/2}  (d-1)^{3/2}} \sum u_i^{1/2}.
\label{goodX} \ee 

Fix a set $X$ achieving~\eqref{goodX}. We proceed to show the existence of an assignment to the variables achieving large $|h(x)|$.

Let $Y=[n]-X$
and write $h=\sum_{s=0}^d h_s$ where 
\[ h_s=\sum_{S\subseteq X, T \subseteq Y, S \cup T \in U, |S|=s} h_{S,T} x^S y^T\]
and where $x^S=\prod_{i\in S} x_i$, etc.

Decompose $h_1$ into $h_1=\sum_{i\in X} x_i h_{i*}$ where \[ h_{i*}=\sum_{T \subseteq Y: T \cup \{i\} \in U} h_{\{i\},T}y^T.\] 
$h_{i*}$ is degree $(d-1)$-bounded, and $\coeffnorm{h_{i*}}{1}=m_i$. Now
choose each $y_i \in \pm 1$ randomly, uniformly and independently. We show
\be E |h_{i*}|  \geq 3^{1-d} m_i^{1/2}. \label{hi*} \ee
as an instance of the following Khintchine-type bound~\cite{khintchine23,kahane64,latalaO94}:

\begin{prop}
Let $p$ be a
multilinear Littlewood polynomial with $w$ monomials of degrees $\leq \delta$,
 in variables $\{y_j\}$ which are iid uniform in $\pm 1$. Then $E|p| \geq 3^{-\delta}(Ep^2)^{1/2}= 3^{-\delta}w^{1/2} $. 
 
Conversely there is a $p$ s.t.\ 
 $E|p| \leq
2^{(1-\delta)/2}w^{1/2}$. 
\label{hi*prop} \end{prop}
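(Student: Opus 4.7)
My plan is to treat the two inequalities independently. The lower bound will follow from the Bonami-Beckner hypercontractive inequality for Rademacher chaos combined with a H\"older interpolation, while the upper bound is witnessed by an explicit tensor-product construction.

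\textbf{Lower bound.} The key input is the Bonami lemma: for any multilinear polynomial $q$ of degree at most $\delta$ in iid $\pm 1$ variables, $\|q\|_4 \leq 3^{\delta/2}\|q\|_2$. This is the Rademacher form of Bonami-Beckner hypercontractivity, to which the Khintchine-Kahane citations in the statement refer. It admits a short induction on the number of variables: split $q = A + y_n B$ with $\deg A\leq\delta$, $\deg B\leq\delta-1$, expand $Eq^4 = EA^4 + 6EA^2B^2 + EB^4$, bound the cross term by Cauchy-Schwarz, and apply the inductive hypothesis separately to $A$ and $B$. Granted this, I apply H\"older with conjugate exponents $3/2$ and $3$ to the decomposition $|p|^2 = |p|^{2/3}\cdot |p|^{4/3}$ to obtain
\[
E[p^2] \leq (E|p|)^{2/3}(E|p|^4)^{1/3},
\]
which rearranges to $E|p| \geq \|p\|_2^3/\|p\|_4^2 \geq 3^{-\delta}\|p\|_2$. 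Parseval over $(\zset/2)^n$ (as in~\eqref{elem-lb3}) gives $\|p\|_2 = \coeffnorm{p}{2} = w^{1/2}$, completing this direction.

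\textbf{Upper bound.} I would take the product
\[
p(y_1,\ldots,y_{2\delta}) = \prod_{j=1}^{\delta}(y_{2j-1}+y_{2j}).
\]
Expanding, $p$ is a sum of $2^\delta$ distinct multilinear monomials, each of degree exactly $\delta$ and coefficient $+1$; hence $p$ is a multilinear Littlewood polynomial with $w=2^\delta$. Independence across the factors gives $E|p| = \prod_{j=1}^{\delta} E|y_{2j-1}+y_{2j}| = 1$, since each factor equals $2$ or $0$ with probability $1/2$. Similarly $Ep^2 = 2^\delta = w$, so $E|p| = 2^{-\delta/2} w^{1/2} \leq 2^{(1-\delta)/2} w^{1/2}$, comfortably within the target (with a factor of $\sqrt{2}$ of slack).

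\textbf{Main obstacle.} The only nontrivial ingredient is the degree-$\delta$ hypercontractive constant $3^{\delta/2}$; everything downstream---the H\"older interpolation and the product calculation---is elementary. I would either cite Bonami's lemma as a black box from the references already given in the statement or include the two-line inductive proof sketched above.
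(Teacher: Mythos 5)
Your proof is correct, and it overlaps with the paper's in its key ingredient but diverges in the final steps of both directions. For the lower bound, the paper also reduces everything to the Bonami lemma $\|p\|_4\le 3^{\delta/2}\|p\|_2$ and the identity $Ep^2=w$; but instead of your H\"older interpolation $Ep^2\le (E|p|)^{2/3}(Ep^4)^{1/3}$, it uses Berger's pointwise inequality $x\ge \frac{3^{3/2}}{2a}(x^2-x^4/a^2)$ with the optimized choice $a=3^{\delta+1/2}w^{1/2}$. The two devices are interchangeable here and yield the identical constant $3^{-\delta}$, so nothing is lost or gained quantitatively; your H\"older route is arguably the more standard and transparent packaging of the same $L^1$--$L^2$--$L^4$ comparison. (One small caveat: your two-line induction for Bonami needs the degree bookkeeping made explicit --- the cross term $6EA^2B^2$ must be bounded by $6\cdot 3^{\delta}\,3^{\delta-1}\|A\|_2^2\|B\|_2^2=2\cdot 9^{\delta}\|A\|_2^2\|B\|_2^2$, which is exactly absorbed by the middle term of $9^\delta(\|A\|_2^2+\|B\|_2^2)^2$; but since the paper itself cites Bonami as a black box, a citation suffices.) For the converse, your example $\prod_{j=1}^{\delta}(y_{2j-1}+y_{2j})$ differs from the paper's $\prod(1+x_i)-\prod(1+y_i)$ (taken from an example of Meka--Nguyen--Vu); yours is simpler, is manifestly constant-free and Littlewood, and in fact achieves $E|p|=2^{-\delta/2}w^{1/2}$, a factor $\sqrt{2}$ below the stated target, whereas the paper's example achieves $2^{(1-\delta)/2}(1-2^{-\delta})^{1/2}w^{1/2}$. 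Both demonstrate the same point --- that the exponential loss in $\delta$ in any such Khintchine bound is unavoidable --- and your variant even slightly narrows the gap between the bases $3$ and $\sqrt{2}$ that the paper flags as an open question. One minor attribution quibble: the Bonami lemma is not what the Khintchine--Kahane citations in the statement refer to; the paper cites it separately as~\cite{Bonami70,ODonnell14}.
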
 
To obtain~\eqref{hi*} apply this upper bound to the polynomial $p=h_{i*}$, with coefficients $p_T=h_{\{i\},T_1}$, on the variables in $Y$.

\proof
Note
\[  E p^2 = \sum_{T_1,T_2} p_{T_1}p_{T_2} E y^{T_1+T_2} 
 = \sum_{T_1,T_2: T_1\oplus T_2=\bar{0}} p_{T_1}p_{T_2}
 = \sum_{T} p_{T}^2 =w \]
where $+$ in the exponent is multiset addition (i.e., repetitions are retained), 
while $\oplus$ is multiset addition modulo $2$ (and $\bar{0}$ denotes multisets with all repetition numbers even). This is because any monomial with an odd repetition number contributes $0$ to the expectation.

Next we require an upper bound on $E p^4$; $w^3$ is obvious but not useful. However, application of the Bonami Lemma~\cite{Bonami70,ODonnell14} gives that
$(E p^4)^{1/4} \leq 
 3^{\delta/2} (E p^2)^{1/2}$, and consequently 
\be E p^4 \leq 9^{\delta} (E p^2)^2 = 9^{\delta} w^2. \label{4th-moment} \ee

At this point, for a weaker bound, we could simply apply the Paley-Zygmund inequality~\cite{PaleyZ32}, which in this setting provides $\Pr(|p|> \frac12 (Ep^2)^{1/2})\geq \frac{9}{16} (Ep^2)^2/(Ep^4) \geq \frac{9}{16} 9^{-\delta}$. Thus $E|p|> \frac{1}{32} 9^{1-\delta} w^{1/2}$. 

Instead, to complete the proof of Prop.~\ref{hi*prop}
we rely on the idea of Berger~\cite{Berger97} to use the inequality:
\[ \forall x,a>0: \quad x \geq \frac{3^{3/2}}{2a}(x^2-x^4/a^2) \]
Applying this inequality with $x=|p|$, and using~\eqref{4th-moment}, we find
\bean E |p| & \geq & \frac{3^{3/2}}{2a}\left( E(p^2-p^4/a^2)\right) \\
&\geq & \frac{3^{3/2}}{2a}\left(w - w^2 9^{\delta} /a^2 \right)
\eean
Setting $a= 3^{\delta+1/2} w^{1/2}$ we have 
\bean E |p| & \geq & 3^{-\delta} w^{1/2}
\eean
as desired.

For the converse, consider the following polynomial (suggested from an example in~\cite{mekaNV16}) in 
variables $x_{i}$ and $y_{i}$ ($1\leq i \leq \delta$):
\[ p=\prod_{i=1}^{\delta} (1+x_{i}) - \prod_{i=1}^{\delta} (1+y_{i}) \]
This is a constant-free Littlewood polynomial of degree $\delta$ with $w=2^{\delta+1}-2$
monomials.
The only values $p$ can achieve are $0, 2^\delta$ and $-2^\delta$, the latter values occurring with probabilities $2^{-\delta}(1-2^{-\delta})$. So $E|p|=2-2^{1-\delta}$, and consequently $E|p| \leq
2^{(1-\delta)/2}w^{1/2}$. 
\qed

It is an interesting question to resolve the gap between $3$ and $\sqrt{2}$ in the base of this Khintchine bound. 

Now we prove the lower bound of Theorem~\ref{main-thm}. Apply~\eqref{hi*} 
and linearity of expectation to pick the $y_i$'s so that $\sum  |h_{i*}|  \geq 3^{1-d} \sum m_i^{1/2}$; apply~\eqref{goodX} to conclude that
$\sum_i |h_{i*}|  \geq 
3^{1-d} 2^{-5/2} (d-1)^{-3/2} \sum u_i^{1/2}$. 
Then pick the $x_i$'s so that $h_1=\sum_i |h_{i*}|$. 

Having set the $y_i$'s randomly and tailored the $x_i$'s to control $h_1$, we have lost all control over the values of $h_0,h_2,\ldots,h_d$, each of which lies in the range $\pm u$. In particular these terms can easily cancel out the contribution
from $h_1$.

We now show how to remedy the
 potential cancelation of the $h_1$ term by the remaining terms. We do this by changing the assignment to the $x_i$'s.
 (This step of the proof is somewhat unusual, but 
 it is not completely new: we are aware that the same idea was used for a different purpose in~\cite{DinurFKO07}.)
The modification of the $x_i$'s is probabilistic. We use approximation theory to show that there exists a value of $0\leq p \leq 1$ s.t.\ if we flip each $x_i$ independently with probability $p$, then $E|h|$ is large.

Set $-1 \leq z=1-2p \leq 1$. Let $x'_i$ ($1 \leq i \leq n_1$) be independent random variables with $\Pr(x'_i=1)=1-p$, $\Pr(x'_i=-1)=p$. Write $xx'$ for the list of products $(x_1x'_1,\ldots,x_{n_1}x'_{n_1})$.
 Let $f(z)$ be the polynomial
\[f(z)= E_{x'} h(xx',y) = \sum_{s=0}^d (1-2p)^s h_s(x,y) = \sum_{s=0}^d z^s h_s(x,y) \]
What we know about this polynomial is only
 that the linear coefficient of $z$, $h_1(x,y)$,
is large, specifically $h_1(x,y) \geq 
3^{1-d} 2^{-5/2} (d-1)^{-3/2} \sum u_i^{1/2}$. 
It is useful to note that $h_1(x,y)=f'(0)$.
 As noted, we have no control over the coefficients of the remaining terms in $f$. But the lower bound on $f'(0)$ is enough information to guarantee that the polynomial is large somewhere in $[-1,1]$. This is a consequence of a classic result in uniform approximation theory (of which we quote only a special case):

\begin{lemma}[Bernstein~\cite{Natanson61}] Let $f$ be a polynomial of degree $d$ with complex coefficients. Then $\sup_{z \in [-1,1]} |f(z)| \geq |f'(0)|/d$. \end{lemma}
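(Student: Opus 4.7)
The plan is to reduce the statement about algebraic polynomials on $[-1,1]$ to an equivalent statement about trigonometric polynomials on the circle, via the Chebyshev substitution $z=\cos\theta$, and then invoke Bernstein's inequality for trigonometric polynomials.

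First, I would set $g(\theta) = f(\cos\theta)$. Since $\cos^k\theta$ is a trigonometric polynomial in $\theta$ of degree $k$ (this is Chebyshev's identity $\cos(k\theta) = T_k(\cos\theta)$, which allows writing any monomial $z^k$ in terms of $\cos(j\theta)$ with $j\le k$), $g$ is a trigonometric polynomial in $\theta$ of degree at most $d$ with complex coefficients. Because $\cos\theta$ sweeps all of $[-1,1]$, we have $\sup_{\theta\in\mathbb{R}} |g(\theta)| = \sup_{z\in[-1,1]} |f(z)| =: M$. Differentiating gives $g'(\theta) = -\sin\theta\cdot f'(\cos\theta)$, and in particular $g'(\pi/2) = -f'(0)$.

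Second, I would invoke Bernstein's trigonometric inequality: for any trigonometric polynomial $T$ of degree at most $d$ (with complex coefficients),
\[
\|T'\|_\infty \leq d\,\|T\|_\infty.
\]
Applied to $g$ and evaluated at $\theta=\pi/2$, this yields
\[
|f'(0)| = |g'(\pi/2)| \leq d\,\|g\|_\infty = dM,
\]
which rearranges to $M \geq |f'(0)|/d$, as desired.

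The substantive step is thus the trig polynomial inequality itself, and that would be the main obstacle. My preferred route is the classical proof via the M.~Riesz interpolation formula: one writes $T'(\theta)$ as an explicit linear combination $T'(\theta)=\sum_{k=1}^{2d} c_k(\theta)\,T(\theta_k)$ of samples at equispaced nodes, with coefficients satisfying $\sum_k |c_k(\theta)| \leq d$, from which the bound is immediate by the triangle inequality. A contradiction argument is also viable: if some $\theta_0$ had $|T'(\theta_0)|>d\|T\|_\infty$, then the auxiliary trigonometric polynomial $S(\theta) = \|T\|_\infty\sin(d(\theta-\theta_0)+\phi) - T(\theta)$, with phase $\phi$ chosen so that $S'(\theta_0)=0$, would be forced (by the magnitudes of $S(\theta_k)$ at the zeros of the leading sine term and by the extra vanishing at $\theta_0$) to have more zeros in a period than a trig polynomial of degree $d$ can have, a contradiction. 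Either route is standard, but a clean writeup must handle the complex-coefficient case, which is routine since the inequality is invariant under multiplication of $T$ by a unimodular constant that rotates $T'(\theta_0)$ to the positive real axis.
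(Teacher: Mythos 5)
Your proof is correct. The paper does not prove this lemma at all --- it quotes it as a classical result of Bernstein (citing Natanson) --- and your reduction via $z=\cos\theta$ to the trigonometric Bernstein inequality $\|T'\|_\infty\le d\|T\|_\infty$ is precisely the standard derivation of that classical result (indeed of the stronger pointwise bound $|f'(x)|\le d\,\|f\|_\infty/\sqrt{1-x^2}$, of which the lemma is the case $x=0$), with the complex-coefficient issue correctly handled.
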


As a result we immediately obtain that there is a setting of the $x_i$'s such that
\[ |h(x,y)| \geq 
 3^{1-d} 2^{-5/2} (d-1)^{-3/2}d^{-1} \sum u_i^{1/2}.\]

\section{Discussion}
\subsection{How quasi-random is the determinant?}
Let $U$ be the collection of monomials that occur as permutation matrices in a $d \times d$ matrix. Here $n=d^2$ and $u=d!$. 

The determinant is in $\cL_{U,\rset}$ and
the upper bound for the determinant is the volume bound of Hadamard: since each row of the matrix has $L_2$ norm at most $\sqrt{d}$, the volume is at most $d^{d/2} = e^{\frac{1}{2} d \log d}$. This qualifies the determinant as quasi-random by our soft definition, since the trivial upper bound is $d! \in e^{d \log d - d + O(\log d)}$.
 More significantly for our current purpose, the $d^{d/2}$ bound is tight for infinitely many $d$ (powers of $2$ and also some other values).

However, using the (easy) upper bound of Theorem~\ref{main-thm}, we have that there exists a Littlewood polynomial $h$ with the same monomials such that $\max_{x \in \pmo^n} |h(x)| \leq
\sqrt{2 (d^2+1) d!}
 \leq e^{\frac12 (d \log d - d) + O(\log d)}$. 
This improves on the determinant by a factor of $\cong e^{d/2}$. (The simple lower bound for this $U$ is $\sqrt{u} = \sqrt{d!}$. The upper and lower bounds are within a factor of merely
$\sqrt{2(d^2+1)}$.)

In short the determinant is fairly good as a quasi-random polynomial, but not competitive with a random polynomial supported on the same monomials.

We remark that matrices with $\pm 1$ entries are a subject of extensive study in mathematical physics and beyond, starting with Wigner~\cite{Wigner55}; however this literature is apparently not too connected to our question, as the focus is on random (symmetric) matrices (not extremal ones) and the entire spectrum, not just the determinant, is of essence.

\subsection{ $ \inf_{h \in \cL_{U,\rset}} \valnorm{h}{D}{\infty} $: complex derandomization}
For any Littlewood family $U$, our lower bound on $ \inf_{h \in \cL_{U,\rset}} \valnorm{h}{I}{\infty} $ is of course also a lower bound on  $ \inf_{h \in \cL_{U,\rset}} \valnorm{h}{D}{\infty} $. Our approximation theory argument can be derandomized in a slightly unusual way.
That is, applying Bernstein's lemma and the maximum modulus principle, we conclude that there is a $z \in C$ such that $h(zx_1,\ldots,zx_{n_1},y_1,\ldots,y_{n_2})$ achieves the lower bound.
Of course we do not know this number $z$ explicitly (any more than 
 in the argument of Section~\ref{lb-sec} we know $p$ and all the new values of the $x_i$'s).
 
\subsection{Large $d$; Khintchine; Anti-concentration} \label{disc:khintchine}
 Our lower bound in Theorem~\ref{main-thm} trivializes at $d$ logarithmic in the size of the hypergraph. 
The exponential loss occurs in the Khintchine-type inequality, Prop.~\ref{hi*prop}. As noted in the converse portion of that proposition, the exponential loss is unavoidable.

The converse also has an implication for the limits of \textit{anti-concentration} results. An anti-concentration bound on a mean-$0$ random variable is an upper bound on
$\Pr(|p|<\ep (Ep^2)^{1/2})$ for some $\ep$; earlier we applied Paley-Zygmund in this way for $\ep=1/2$ but one may seek better bounds for smaller $\ep$, and 
considerable work has been done on anti-concentration for polynomials. 
However even in the most tractable case, when the underlying random variables are iid normal~\cite{CarberyW01}, bounds for polynomials of degree $\delta$ are useful only for $\Pr(|p|<(Ep^2)^{1/2} / \delta^\delta)$, i.e., for exponentially small $\ep$ as a function of $\delta$. These bounds have been extended with slight loss to hypercontractive random variables (which includes the $\pm 1$ case we study), but due to the exponential dependence on $\delta$, such bounds cannot lead to a version of Theorem~\ref{main-thm} that does not suffer the exponential in $d$. And of course since anti-concentration implies a lower bound on $E|p|$, the impossibility of a Khintchine bound sub-exponential in $\delta$ also implies that strong
anti-concentration results cannot be sought for $\delta$ more than logarithmic in $w$. Indeed the $p$ provided as an example in Prop.~\ref{hi*prop} is a constant-free Littlewood polynomial of degree $\delta$ with $2^{\delta+1}-2$
monomials, for which $\Pr(p=0)$ is 
close to $1$ (specifically $>1-2^{1-\delta}$). 

However, despite the Khintchine converse, it is an open question whether the exponential dependence on $d$ in Theorem~\ref{main-thm} is necessary.

\section*{Acknowledgments}
Part of this work was done while the authors were in residence at the Simons Institute for Theory of Computing, and part while the second author was in residence at the Israel Institute for Advanced Studies, supported by 
a EURIAS Senior Fellowship co-funded by the Marie 
Sk{\l}odowska-Curie Actions under the 7th Framework Programme. Work of the second author was also supported by United States NSF grant 1618795. Thanks to an anonymous referee for insightful comments which helped tighten our bounds.

\bibliographystyle{plain}
\bibliography{refs}
\end{document}